\begin{document}
\title{Tensors in modelling multi-particle interactions \thanks{The work was supported by the Russian Science Foundation, grant 19-11-00338}}
%
%
\author{Daniil A. Stefonishin \inst{1} \and
Sergey A. Matveev \inst{1,2}
\and
Dmitry A. Zheltkov \inst{2,3}}
\authorrunning{D. Stefonishin et al.}
%
\institute{Skolkovo Institute of Science and Technology, Moscow, Russia \and
Marchuk Institute of Numerical Mathematics RAS, Moscow, Russia \and Moscow Institute of Physics and Technology, Moscow, Russia
\email{s.matveev@skoltech.ru}\\
\url{www.skoltech.ru}, \url{www.inm.ras.ru}\\
\email{d.stefonishin@skotech.ru},\ \email{dmitry.zheltkov@gmail.com}}
\maketitle              
\begin{abstract}

In this work we present recent results on application of low-rank tensor decompositions to modelling of aggregation kinetics taking into account
multi-particle collisions (for three and more particles). Such kinetics can
be described by system of nonlinear differential equations with right-hand
side requiring $N^D$ operations for its straight-forward evaluation, where $N$ is
number of particles’ size classes and $D$ is number of particles colliding simultaneously.
Such a complexity can be significantly reduced by application
low rank tensor decompositions (either Tensor Train  or Canonical Polyadic)
to acceleration of evaluation of sums and convolutions from right-hand side.
Basing on this drastic reduction of complexity for evaluation of right-hand
side we further utilize standard second order Runge-Kutta time integration
scheme and demonstrate that our approach allows to obtain numerical solutions
 of studied equations with very high accuracy in modest times. We
also show preliminary results on parallel scalability of novel approach and
conclude that it can be efficiently utilized with use of supercomputers.

\keywords{Tensor train  \and Aggregation kinetics \and Parallel algorithms.}
\end{abstract}
\section{Introduction}

\quad Aggregation of inelastically colliding particles plays important role in many technological and natural phenomena. In case of spatially homogeneous systems aggregation process can be described by famous Smoluchowski kinetic equations~\cite{Galkin_book}. These equations describe time-evolution of mean concentrations $n_k$ of particles of size $k$ per unit volume of media:
\begin{eqnarray*}
\notag
\frac{d n_k}{d t} = \frac{1}{2} \sum\limits_{i+j=k} C_{i, j} n_i n_{j} - n_k \sum\limits_{i=1}^{\infty} C_{k, i} n_k.
\end{eqnarray*}
Such a model is well-studied by lots of analytical and numerical methods but allows to take into account only pairwise particles' collisions. In this work we consider a generalization of aggregation equations for case of multi-particle interactions
\begin{eqnarray*}
    \frac{\mathrm{d} \mathbf{n}}{\mathrm{d}\hspace{0.2mm}t}
    = \sum_{d = 2}^{D}
        \mathcal{S}^{(D\hspace{0.3mm})}\!\left[\mathbf{n}\right]
    = \sum_{d = 2}^{D} \left[
        \mathcal{P}^{(D\hspace{0.3mm})}\!\left[\mathbf{n}\right]
        + \mathcal{Q}^{(D\hspace{0.3mm})}\!\left[\mathbf{n}\right]\hspace{-0.3mm}
    \right]\!,
\end{eqnarray*}
where operators~
$
    \mathcal{P}^{(d\hspace{0.3mm})}
    = \left[
        \hspace{0.3mm}p^{(d\hspace{0.3mm})}_{1}\!,
        \hspace{0.3mm}p^{(d\hspace{0.3mm})}_{2}\!,
        \hspace{1mm}\ldots\hspace{1mm}
    \right]^{T}
$
and~
$
    \mathcal{Q}^{(d\hspace{0.3mm})}
    = \left[
        q^{(d\hspace{0.3mm})}_{1}\!,
        \hspace{0.3mm}d^{(d\hspace{0.3mm})}_{2}\!,
        \hspace{1mm}\ldots\hspace{1mm}
    \right]^{T}
$
are defined as
\begin{gather}
\notag
    p^{(d\hspace{0.1mm})}_{k}\!\left[\mathbf{n}\right]
    = \frac{1}{d\hspace{0.1mm}!}\sum_{\left|\mathbf{i}_{d}\right| = k}
        C^{(d\hspace{0.1mm})}_{
            \mathbf{i}_{d}
        }\hspace{0.5mm}n_{i_{1}}n_{i_{2}}\ldots n_{i_{d}},
        \quad k \in \mathbb{N},
\\
\notag
    q^{(d\hspace{0.1mm})}_{k}\!\left[\mathbf{n}\right]
    = - \frac{n_{k}}{\left(d - 1\right)\hspace{0.1mm}!}\sum_{
        \mathbf{i}_{d - 1} \in \mathbb{N}^{d - 1}
    }
        C^{(d\hspace{0.3mm})}_{
           \mathbf{i}_{d - 1}, \hspace{0.2mm}k
        }\hspace{1mm}n_{i_{1}}n_{i_{2}}\ldots n_{i_{d - 1}},
        \quad k \in \mathbb{N},
        \\
\notag
    \mathbf{i}_{d} = \left(i_{1}, i_{2}, \ldots, i_{d}\right)\!,
    \quad 
    \left|\mathbf{i}_{d}\right| = i_{1} + i_{2} + \ldots + i_{d},
        \quad 2 \le d \le D\hspace{-0.3mm}.
\end{gather}
Those operators $p^{(d\hspace{0.3mm})}$ and $q^{(d\hspace{0.3mm})}$ correspond to description of simultaneous collisional aggregation of $d$ particles. We represent such a model informally in Fig.~\ref{Figure_aggregation}. 
\begin{figure}
\begin{center}
\includegraphics[scale=0.15]{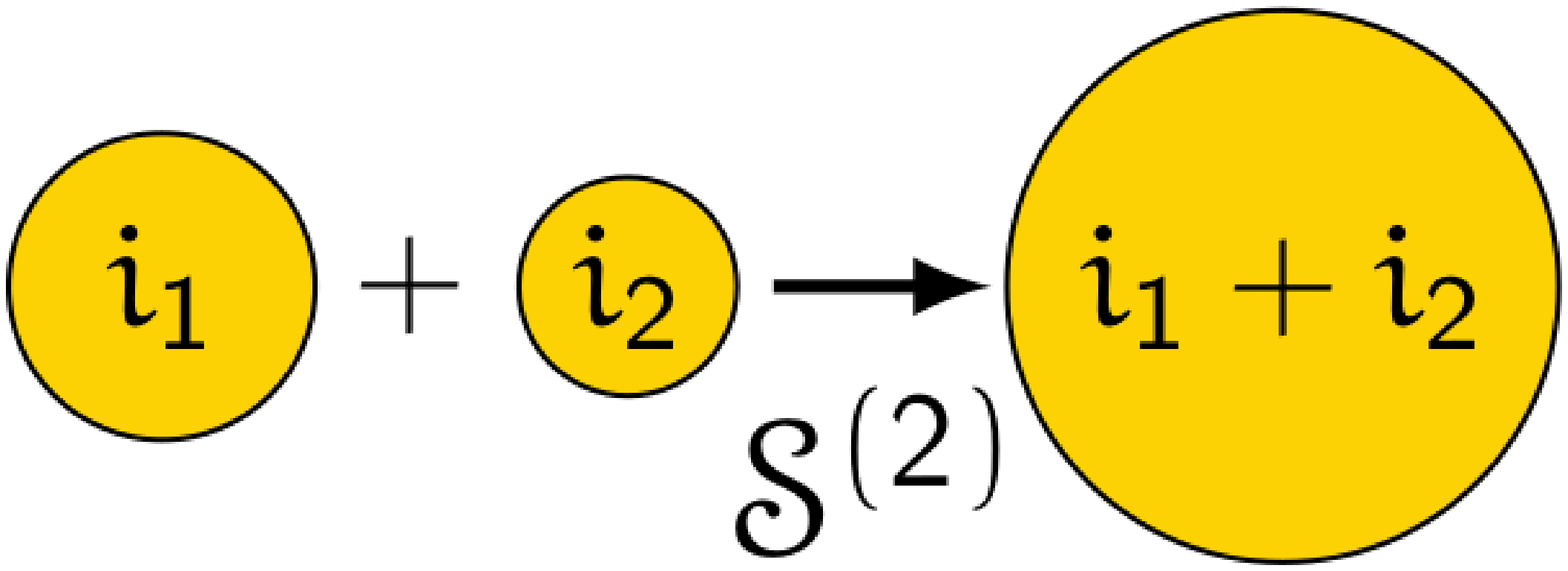}\qquad\qquad
\includegraphics[scale=0.15]{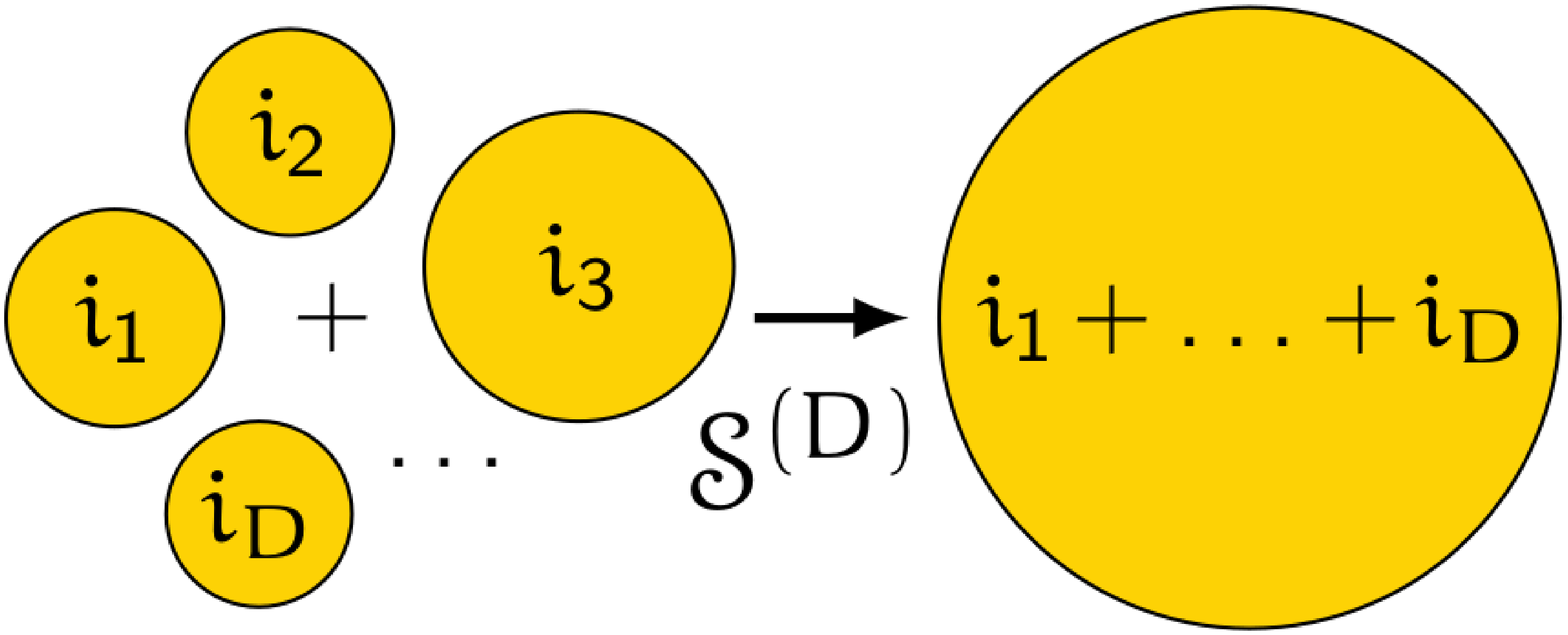}
\end{center}
\caption{On the left panel binary aggregation of particles is presented, and multi-particle collision on the right panel} \label{Figure_aggregation}
\end{figure}

In case of defined initial conditions $n_k(t = 0)$ we obtain a Cauchy problem which is known to be well-posed under assumption of bounded non-negative symmetric coefficients \cite{Galkin2013}. There also exist a very few examples of known analytical solutions for the Cauchy problem for multi-particle aggregation kinetic equations \cite{Krapivsky1991}. Unfortunately, such class of mathematical models is much less studied numerically than class of aggregation equations accounting only binary collisions. The reason lies in higher order of non-linearity and exponential growth of complexity of evaluation of the right-hand side with respect to number $D$ of simultaneously colliding particles. It makes numerical treatment of such systems extremely time-consuming. 

In our recent work we proposed a novel approach based on application of low-rank tensor train (TT) decomposition ~\cite{OselTT,TTcross} to acceleration of computations and presented its high accuracy ~\cite{Stefonishin_tensors,Stefonishin_triple}. 
This allowed us to reduce the complexity of evaluation of sums from the right-hand side from $O(N^D)$ operations to $O(N D R^2 \log N)$ operations, where $R \ll N$ is the maximal TT-rank of kinetic coefficients. In case of pre-defined low-rank Canonical Polyadic (CP) decomposition of kernel coefficients the complexity can be reduced even to $O(N D R \log N)$ but we do not know a robust way finding CP-decomposition even for 3-dimensional tensors. Nevertheless, assumption that $ R \ll N$ is crucial for efficiency of proposed approach. 

In the current work we prove that TT-ranks of a wide class of generalized Brownian kernels are low and do not depend on number $N$ of accounted kinetic equations for arbitrary dimension $D$. We also present an efficient parallel implementation of our TT-based approach and present preliminary tests of scalability our method.

\section{Estimates of TT-ranks for generalized Brownian kernels}

\quad In this section we present estimates of tensor ranks for generalized Brownian kernel~$\mathscr{C}^{(D)} = \left[C^{(D)}_{\mathbf{i}_{D}}\right]$ (see example of exact Brownian coefficients e.g. at \cite{MatveevPRL}) with the~elements of~the~following form:
 
 \begin{gather}
 \label{ch3.eq17}
        C^{(D\hspace{0.3mm})}_{\mathbf{i}_{D}}
        \equiv
        C^{(D\hspace{0.3mm})}_{\mathbf{i}_{D}}
        \!\left[
            \hspace{0.3mm}\mu_{1}, \hspace{0.3mm}\mu_{2}, \,\ldots,
            \,\mu_{D}
        \right]
        = \sum_{\sigma}
            i_{\sigma\left(1\right)}^{\,\mu_{1}}
            \cdot i_{\sigma\left(2\right)}^{\,\mu_{2}}
            \cdot\hspace{1mm}\ldots\hspace{1mm}
            \cdot i_{\sigma\left(D\hspace{0.2mm}\right)}^{\,\mu_{D}}
    \end{gather}
Here we assume the~sum to~be over all permutations~$\sigma$ of the set~$\left\{1, 2, \ldots, D\right\}$.
    
Recall the~definition of~a~TT-decomposition for a~kernel~$\mathscr{C}^{(D)}$ which is of~the~form
\begin{gather}
\label{ch3.eq4}
    C^{(D)}_{\mathbf{i}_{D}}
    = \hspace{-2mm}\sum_{
        r_{0}, \hspace{0.3mm}r_{1}, \,\ldots, \,r_{D}
    }\hspace{-2mm}
        H^{(1)}_{r_{0}, \hspace{0.3mm}i_{1}, \hspace{0.3mm}r_{1}}
        \cdot H^{(2)}_{r_{1}, \hspace{0.3mm}i_{2}, \hspace{0.3mm}r_{2}}
        \cdot\hspace{1mm} \ldots\hspace{1mm}
        \cdot H^{(D\hspace{0.3mm})}_{
            r_{D - 1}, \hspace{0.3mm}i_{D}, \hspace{0.3mm}r_{D}
        },
\\\notag
    1 \le r_{\lambda} \le R_{\lambda},
    \quad 0 \le \lambda \le D;
    \qquad R_{0} = R_{D} = 1.
\end{gather}

For~the~kernel~$\mathscr{C}^{(D)}$ of~dimension~$D$ with the~elements~\eqref{ch3.eq17} there holds 
an~estimate on the TT-ranks~$R_{\lambda}$:
\begin{gather}
\notag
    \max\!\left\{R_{\lambda}\colon 0 \leq \lambda \leq D\right\}
    = \dbinom{D}{\left\lceil D/2 \right\rceil} 
    \equiv O\!\left(2^{\hspace{0.2mm}D} / \sqrt{D}\right)\!.
\end{gather}

Such estimate can be verified by the means of the following
\begin{theorem}
\label{s3.3.th1}
    Let the parameters~$\mu_{1}, \mu_{2}, \ldots, \mu_{D}$ be fixed.
    For a~given tensor~$\mathscr{C}^{(D)}$ in~\hbox{$D$} dimensions
    of~sizes~\hbox{$N \times N \times \ldots \times N$} with~the~elements~\eqref{ch3.eq17}
    one can prove the following estimates on~its~TT-ranks~$R_{\lambda}$:
    \begin{gather}
    \notag
        R_{\lambda}
        \le \dbinom{D}{\lambda}
        \equiv
        \frac{D\hspace{0.3mm}!}{
            \lambda\hspace{0.2mm}!\cdot\left(D - \lambda\right)\hspace{0.2mm}!
        },
        \qquad 0 \le \lambda \le D\hspace{-0.3mm}.
    \end{gather}
\end{theorem}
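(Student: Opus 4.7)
The plan is to reduce the claim to a bound on the rank of the $\lambda$-th \emph{unfolding matrix} of $\mathscr{C}^{(D)}$, namely the $N^{\lambda}\times N^{D-\lambda}$ matrix whose rows are indexed by the multi-index $(i_{1},\ldots,i_{\lambda})$ and whose columns are indexed by $(i_{\lambda+1},\ldots,i_{D})$. By the standard TT-SVD theory~\cite{OselTT}, the minimal TT-rank $R_{\lambda}$ in a representation of type~\eqref{ch3.eq4} coincides with the rank of this unfolding. It therefore suffices to exhibit a separable decomposition of $C^{(D)}_{\mathbf{i}_{D}}$ into $\binom{D}{\lambda}$ products of a function of $(i_{1},\ldots,i_{\lambda})$ with a function of $(i_{\lambda+1},\ldots,i_{D})$.

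To make the splitting transparent, I first reindex the symmetric sum~\eqref{ch3.eq17} by the inverse permutation $\pi=\sigma^{-1}$, which gives
\begin{gather*}
    C^{(D)}_{\mathbf{i}_{D}}
    = \sum_{\pi}\prod_{k=1}^{D} i_{k}^{\,\mu_{\pi(k)}},
\end{gather*}
so that every mode $k$ now explicitly carries its own index $i_{k}$ raised to a permuted exponent $\mu_{\pi(k)}$. The key step is then to partition the $D!$ permutations $\pi$ of $\{1,\ldots,D\}$ according to the $\lambda$-subset $A=\pi(\{1,\ldots,\lambda\})$. For a fixed $A$, the restriction $\pi|_{\{1,\ldots,\lambda\}}$ ranges over all bijections onto $A$, while independently $\pi|_{\{\lambda+1,\ldots,D\}}$ ranges over all bijections onto $\{1,\ldots,D\}\setminus A$, so the sum factors:
\begin{gather*}
    C^{(D)}_{\mathbf{i}_{D}}
    = \sum_{\substack{A\subset\{1,\ldots,D\}\\|A|=\lambda}}
      f_{A}(i_{1},\ldots,i_{\lambda})\,g_{A}(i_{\lambda+1},\ldots,i_{D}),
\end{gather*}
where $f_{A}$ and $g_{A}$ are explicit sums of $\lambda!$ and $(D-\lambda)!$ monomials built from the exponents indexed by $A$ and by its complement, respectively. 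This exhibits the $\lambda$-unfolding as a sum of $\binom{D}{\lambda}$ rank-one matrices, whence $R_{\lambda}\le\binom{D}{\lambda}$; the extreme cases $\lambda=0,D$ are consistent with the convention $R_{0}=R_{D}=1$.

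The only delicate point is the combinatorial rewriting itself: one has to verify that the correspondence $\pi\mapsto\bigl(A,\pi|_{\{1,\ldots,\lambda\}},\pi|_{\{\lambda+1,\ldots,D\}}\bigr)$ is a bijection between the $D!$ permutations and the triples appearing on the right-hand side (which is immediate), and that the factorization remains valid even when some of the exponents $\mu_{k}$ coincide. Such coincidences only cause certain $f_{A}$'s (or $g_{A}$'s) to become proportional, which may tighten the rank bound but never loosen it. Beyond this bookkeeping the argument is purely algebraic, so I anticipate no further obstacles.
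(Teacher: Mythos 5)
Your proposal is correct, and it reaches the bound by a genuinely different route than the paper. You bound the rank of the $\lambda$-th unfolding matrix directly, by grouping the permutations $\pi$ according to the subset $A=\pi\left(\left\{1,\ldots,\lambda\right\}\right)$ and factoring each group into a product $f_{A}\cdot g_{A}$; the conclusion then follows from the standard fact~\cite{OselTT} that the minimal TT-ranks of~\eqref{ch3.eq4} coincide with the unfolding ranks and are simultaneously attainable. The paper instead runs an explicit induction on $\lambda$: it identifies each rank index $r_{\lambda}$ with an increasing $\lambda$-tuple (equivalently, a $\lambda$-subset of $\left\{1,\ldots,D\right\}$), posits that the partial contraction of the first $\lambda$ cores equals the $\lambda$-dimensional kernel $C^{(\lambda)}_{\mathbf{i}_{\lambda}}$ evaluated at the corresponding subset of exponents, and verifies the step via the recursion~\eqref{ch3.eq18}. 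The combinatorial heart is identical --- your left factors $f_{A}$ are exactly the paper's interface quantities $C^{(\lambda)}_{\mathbf{i}_{\lambda}}\!\left[\mu_{r_{1,\lambda}},\ldots,\mu_{r_{\lambda,\lambda}}\right]$, and the recursion~\eqref{ch3.eq18} is just the $\lambda\to\lambda+1$ instance of your grouping --- but the packaging differs. Your version is shorter and arguably cleaner, at the price of invoking the TT-SVD theorem as a black box and not producing the cores; the paper's version is self-contained and yields the cores $\mathscr{H}^{(\tau)}$ explicitly (sparse, with entries $0$ or $i_{\tau}^{\,\mu_{\xi}}$), which is what the subsequent parallel algorithm actually consumes. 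Your remark about coinciding exponents is right and applies equally to both arguments: degeneracies can only lower the rank. One small point worth making explicit if you write this up: the theorem asserts all the bounds $R_{\lambda}\le\binom{D}{\lambda}$ for a single decomposition, so you should note (as you implicitly do) that the TT-SVD construction achieves all unfolding ranks simultaneously, not merely one $\lambda$ at a time.
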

\begin{proof}
    Let us put~$R_{\lambda} := \dbinom{D}{\lambda}$
    for~$0 \le \lambda \le D$\hspace{-0.3mm}.
    To~prove the~theorem we simply need to construct a~tensor train decomposition of~the~tensor~$\mathscr{C}^{(D)}$ with these predefined ranks~$R_{\lambda}$.

    Further we assume that for each number~$
        1 \le \lambda \le D
    $ it is chosen a~bijection~$
        r_{\lambda} \to \left(
            r_{1, \hspace{0.3mm}\lambda},
            \hspace{0.3mm}r_{2, \hspace{0.3mm}\lambda},
            \,\ldots,
            \,r_{\lambda, \hspace{0.3mm}\lambda}
        \right)
    $ between the~sets
    \begin{gather}
    \notag
        \left\{
            r_{\lambda} \in \mathbb{N}
            \colon 1 \le r_{\lambda} \le R_{\lambda}
        \right\}\!,
    \\\notag
        \mathcal{R}_{\lambda} := \left\{
            \left(
                r_{1, \lambda}, \hspace{0.3mm}r_{2, \hspace{0.3mm}\lambda},
                \,\ldots,
                \,r_{\lambda, \hspace{0.3mm}\lambda}
            \right) \in \mathbb{N}^{\lambda}
            \colon 
            1 \le r_{1, \hspace{0.3mm}\lambda}
            < r_{2, \hspace{0.3mm}\lambda}
            < \ldots < r_{\lambda, \hspace{0.3mm}\lambda} \le D
        \right\}\!.
    \end{gather}
    One can specify such mappings for~sure due~to~the~coincidence of~the~cardinalities of~the~considered sets.

    Note, that for~all~numbers~$1 \le \lambda \le D - 1$ one can
    check the~correctness of~the~identity
    \begin{gather}
    \label{ch3.eq18}
        C^{(\lambda + 1)}_{\mathbf{i}_{\lambda + 1}}\!\left[
            \hspace{0.3mm}\mu_{1}, \hspace{0.3mm}\mu_{2}, \hspace{0.3mm}\ldots,
            \hspace{0.3mm}\mu_{\lambda + 1}
        \right]
        \equiv \sum_{\xi = 1}^{\lambda + 1}
            C^{(\lambda)}_{
                \mathbf{i}_{\lambda}
            }\!\left[
                \hspace{0.3mm}\mu_{1}, \,\ldots,
                \,\mu_{\xi - 1}, \hspace{0.3mm}\mu_{\xi + 1},
                \,\ldots, \,\mu_{\lambda + 1}
            \right]
            \cdot 
            i_{\lambda + 1}^{\hspace{0.3mm}\mu_{\xi}}.
    \end{gather}
    With the~given identity we show by induction on~$1 \le \lambda \le D$,
    that it is always possible to~choose the~values~$
        H^{(\tau)}_{
            r_{\tau - 1}, \hspace{0.3mm}i_{\tau}, \hspace{0.3mm}r_{\tau}
        }
    $ in~order to satisfy the~constraints
    \begin{gather}
    \label{ch3.eq19}
        \sum_{
            r_{0}, \hspace{0.3mm}r_{1},
            \hspace{0.3mm}\ldots, \hspace{0.3mm}r_{\lambda - 1}
        }
        \hspace{-6mm}
        H^{(1)}_{r_{0}, \hspace{0.3mm}i_{1}, \hspace{0.3mm}r_{1}}
        \hspace{-1mm}\cdot 
        H^{(2)}_{r_{1}, \hspace{0.3mm}i_{2}, \hspace{0.3mm}r_{2}}
        \hspace{-1mm}\cdot 
        \ldots
        \cdot
        H^{(\lambda)}_{
            r_{\lambda - 1}, \hspace{0.3mm}i_{\lambda},
            \hspace{0.3mm}r_{\lambda}
        }\hspace{-1mm}
        = C^{(\lambda)}_{\mathbf{i}_{\lambda}}\!\left[
            \hspace{0.3mm}\mu_{\hspace{0.3mm}r_{1, \hspace{0.1mm}\lambda}},
            \hspace{0.3mm}\mu_{\hspace{0.3mm}r_{2, \hspace{0.1mm}\lambda}},
            \,\ldots,
            \,\mu_{\hspace{0.3mm}r_{\lambda, \hspace{0.1mm}\lambda}}
        \right]\!,
    \\\notag
        r_{0} = 1,
        \qquad 1 \le i_{\tau} \le N\!,
        \qquad 1 \le r_{\tau} \le R_{\tau},
        \qquad 1 \le \tau \le \lambda.
    \end{gather}
    Thus, the~equality~\eqref{ch3.eq19} with~$\lambda = D$ gives~us the~required TT-decomposition for~the~tensor with~the~elements of~the~form~\eqref{ch3.eq17}.

    \medskip
    The~base of~induction is trivial, if choose
    \begin{gather}
    \notag
        H^{(1)}_{r_{0}, \hspace{0.3mm}i_{1}, \hspace{0.3mm}r_{1}}
        := C^{(1)}_{i_{1}}\!\left[\hspace{0.3mm}\mu_{\hspace{0.3mm}r_{1}}\right]
        \equiv i_{1}^{\hspace{0.3mm}\mu_{\hspace{0.3mm}r_{1}}},
        \qquad
        1 \le i_{1} \le N\!,
        \quad 1 \le r_{1} \le R_{1}.
    \end{gather}
    Next we rewrite the~equation~\eqref{ch3.eq17} for~$D = 2, 3$:
    \begin{gather}
    \notag
        C^{(2)}_{i_{1}, \hspace{0.3mm}i_{2}}
        \!\left[\hspace{0.3mm}\mu_{1}, \hspace{0.3mm}\mu_{2}\right]
        \equiv \begin{bmatrix}
            i_{1}^{\hspace{0.3mm}\mu_{1}}
            &i_{1}^{\hspace{0.3mm}\mu_{2}}
        \end{bmatrix}
        \cdot
        \begin{bmatrix}
            i_{2}^{\hspace{0.3mm}\mu_{2}}
            \\i_{2}^{\hspace{0.3mm}\mu_{1}}
        \end{bmatrix}\!;
    \\\notag
        C^{(3)}_{i_{1}, \hspace{0.3mm}i_{2}, \hspace{0.3mm}i_{3}}
        \!\left[
            \hspace{0.3mm}\mu_{1}, \hspace{0.3mm}\mu_{2}, \hspace{0.3mm}\mu_{3}
        \right]
        \equiv \begin{bmatrix}
            i_{1}^{\hspace{0.3mm}\mu_{1}}
            &i_{1}^{\hspace{0.3mm}\mu_{2}}
            &i_{1}^{\hspace{0.3mm}\mu_{3}}
        \end{bmatrix}
        \cdot
        \begin{bmatrix}
            0
            & i_{2}^{\hspace{0.3mm}\mu_{3}}
            & i_{2}^{\hspace{0.3mm}\mu_{2}}
            \\i_{2}^{\hspace{0.3mm}\mu_{3}}
            &0
            & i_{2}^{\hspace{0.3mm}\mu_{1}}
            \\i_{2}^{\hspace{0.3mm}\mu_{2}}
            &i_{2}^{\hspace{0.3mm}\mu_{1}}
            &0
        \end{bmatrix}
        \cdot
        \begin{bmatrix}
            i_{3}^{\hspace{0.3mm}\mu_{1}}
            \\i_{3}^{\hspace{0.3mm}\mu_{2}}
            \\i_{3}^{\hspace{0.3mm}\mu_{3}}
        \end{bmatrix}\!.
    \end{gather}
    This representation allows us to~describe the~structure of~factors~$\mathscr{H}^{(\tau)}$
    for~each~$\tau$. If~the equality~\eqref{ch3.eq19} is already proven
    for~a~given~$\lambda$, then it is sufficient to~choose 
    \begin{gather}
    \notag
        H^{(\lambda + 1)}_{
            r_{\lambda}, \hspace{0.3mm}i_{\lambda + 1},
            \hspace{0.3mm}r_{\lambda + 1}
        }
        := \begin{cases}
            i_{\lambda + 1}^{
                \hspace{0.3mm}\mu_{\hspace{0.1mm}r_{\xi, \lambda + 1}}
            }\hspace{-1mm},
            &\hspace{4mm} \left\{
                r_{\xi, \hspace{0.1mm}\lambda + 1}
            \right\}
            \cup \mathcal{R}_{\lambda} = \mathcal{R}_{\lambda + 1},
        \\
            0, 
            &\hspace{4mm} \text{otherwise};
        \end{cases}
    \\\notag
        1 \le r_{\lambda} \le R_{\lambda},
        \qquad 1 \le i_{\lambda + 1} \le N\!,
        \qquad 1 \le r_{\lambda + 1} \le R_{\lambda + 1}.
    \end{gather}
    Now it is not hard to prove, that by~the~virtue of~the~proposed choice for~all~numbers~$
        1 \le i_{1}, \hspace{0.3mm}i_{2},
        \hspace{0.3mm}\ldots, \hspace{0.3mm}i_{\lambda + 1} \le N
    $ and~$1 \le r_{\lambda + 1} \le R_{\lambda + 1}$ we have the identity
    \begin{gather}
    \notag
        \sum_{r_{\lambda} = 1}^{R_{\lambda}}
            C^{(\lambda)}_{
                \mathbf{i}_{\lambda}
            }\!\left[
                \hspace{0.3mm}\mu_{\hspace{0.3mm}r_{1, \hspace{0.3mm}\lambda}},
                \hspace{0.3mm}\mu_{\hspace{0.3mm}r_{2, \hspace{0.3mm}\lambda}},
                \,\ldots,
                \,\mu_{\hspace{0.3mm}r_{
                    \lambda, \hspace{0.3mm}\lambda}
                }
            \right]
            \cdot
            H^{(\lambda + 1)}_{
                r_{\lambda}, \hspace{0.3mm}i_{\lambda + 1},
                \hspace{0.3mm}r_{\lambda + 1}
            }
        =
    \\\notag
        = 
        \sum_{\xi = 1}^{\lambda + 1}
            C^{(\lambda)}_{
                \mathbf{i}_{\lambda}
            }\!\left[
                \hspace{0.3mm}\mu_{
                    \hspace{0.3mm}r_{1, \hspace{0.3mm}\lambda + 1}
                },
                \,\ldots,
                \,\mu_{
                    \hspace{0.3mm}r_{\xi - 1, \hspace{0.3mm}\lambda + 1}
                },
                \hspace{0.3mm}\mu_{
                    \hspace{0.3mm}r_{\xi + 1, \hspace{0.3mm}\lambda + 1}
                },
                \,\ldots,
                \,\mu_{
                    \hspace{0.3mm}r_{\lambda + 1, \hspace{0.3mm}\lambda + 1}
                }
            \right]
            \cdot
            i_{\lambda + 1}^{\hspace{0.3mm}\mu_{
                \hspace{0.3mm}r_{\xi, \hspace{0.3mm}\lambda + 1}
            }}\hspace{-1mm}.
    \end{gather}
    Therefore, to verify an~induction step we just need to~use~the~identity~\eqref{ch3.eq18}, where it~is~necessary to~use~parameters~$
        \mu_{\hspace{0.3mm}r_{1, \hspace{0.1mm}\lambda + 1}},
        \hspace{0.1mm}\mu_{\hspace{0.3mm}r_{2, \hspace{0.1mm}\lambda + 1}},
        \,\ldots,
        \,\mu_{
            \hspace{0.3mm}r_{\lambda + 1, \hspace{0.1mm}\lambda + 1}
        }
    $ instead~of~parameters~$
        \mu_{1}, \hspace{0.3mm}\mu_{2},
        \hspace{0.3mm}\ldots, \hspace{0.3mm}\mu_{\lambda + 1}
    $  respectively. The last statement proves the~theorem.
\end{proof}


\section{Parallel algorithm and numerical experiments}
\qquad In our work we exploit organization of parallel computations along particle size coordinate with dimension $N$. It is worth to note that alternative way of parallelization of our approach along TT-ranks leads to dramatic overheads in terms of data exchanges and collective operations and does not lead to speedup of computations.

\begin{figure}[h!]
    \centering
    \includegraphics[scale=0.25]{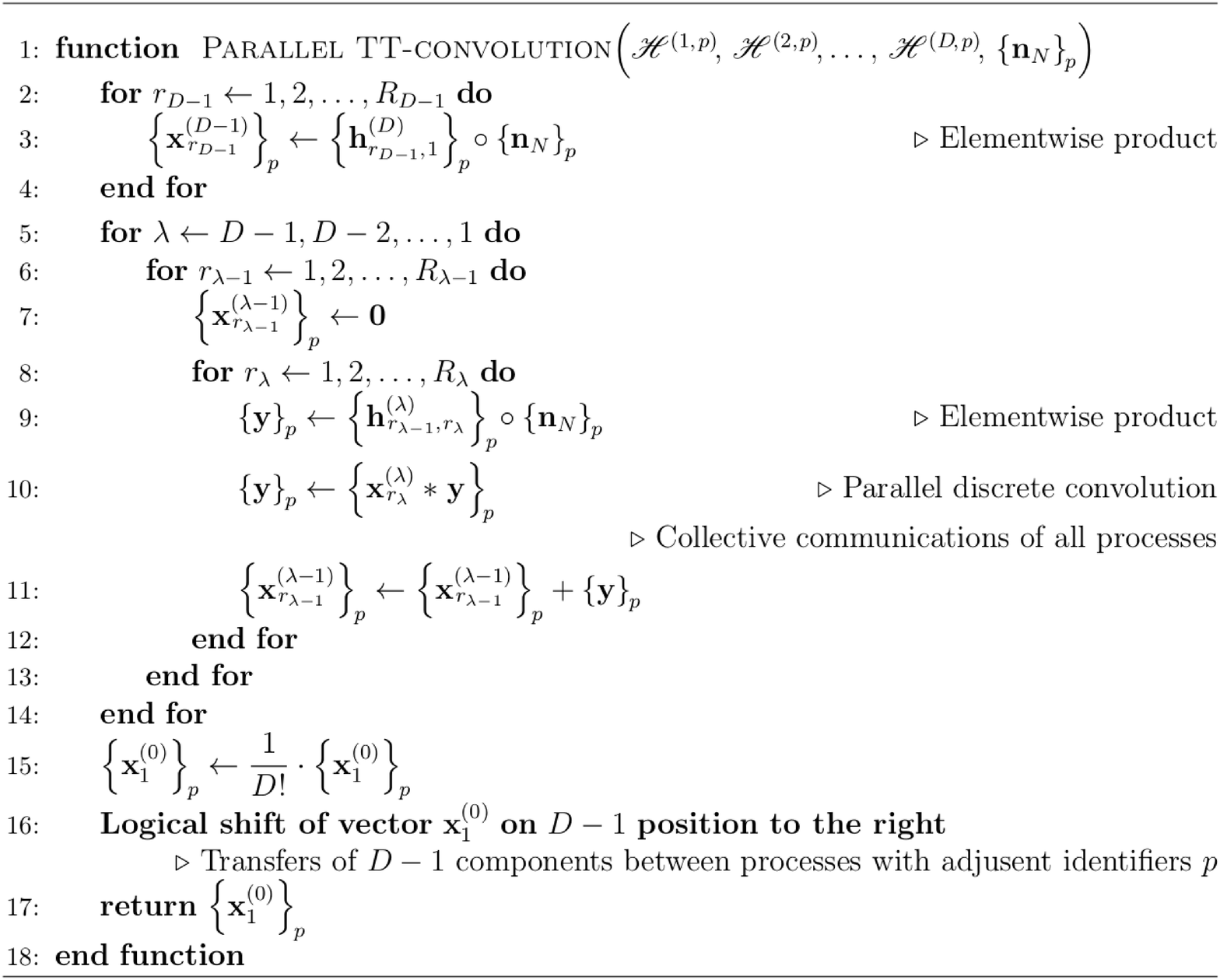}
    \caption{Parallel algorithm for $\mathcal{P}^{(D\hspace{0.3mm})}$.}
    \label{algo_convol}
\end{figure}

\begin{figure}[h!]
    \centering
    \includegraphics[scale=0.25]{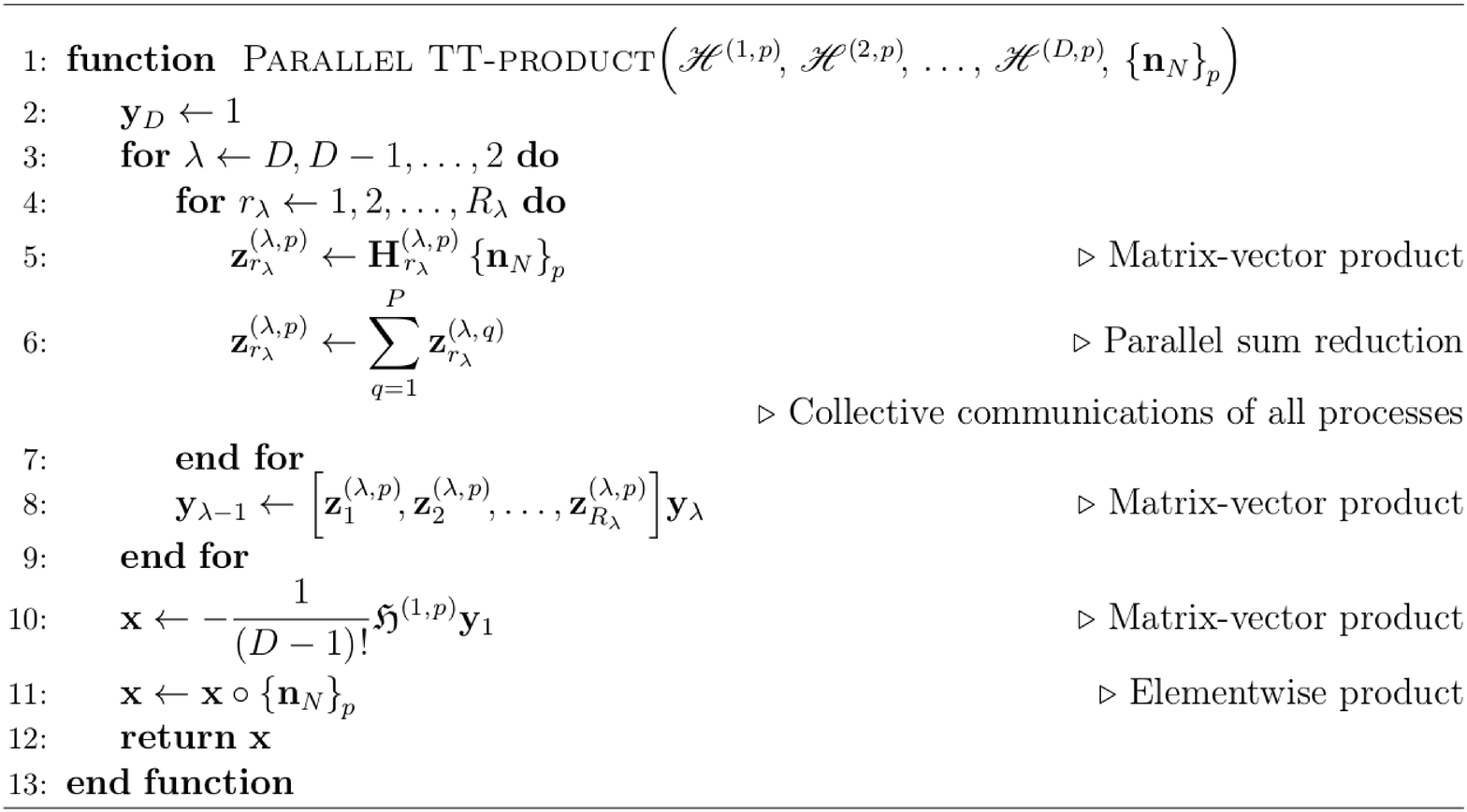}
    \caption{Parallel algorithm for $\mathcal{Q}^{(D\hspace{0.3mm})}$. }
    \label{algo_product}
\end{figure}

Let us assume that we have $P$ processors and number $N$ of studied kinetic equations is divisible by $P$. Thus, we introduce the following notation for getting $p$-th block of taken vector
~\hbox{$
    \mathbf{a}_{N}
    = \left[s
        a_{1},
        \hspace{0.3mm}a_{2},
        \hspace{0.3mm}\ldots,
        \hspace{0.3mm}a_{N}
    \right]^{T}
$}:
\begin{gather}
\notag
    \left\{\mathbf{a}_{N}\right\}_{\hspace{-0.3mm}p}
    := \left[
        a_{\left(p - 1\right)N/P + 1},
        \,a_{\left(p - 1\right)N/P + 2},
        \,\ldots,
        \,a_{\hspace{0.3mm}pN/P}
    \right]^{T}\hspace{-1mm},
    \qquad
    1 \leqslant p \leqslant P\!.
\end{gather}
With use of those notations we can denote blocks
и~$\mathscr{H}^{(\lambda, \hspace{0.3mm}p)}$
of cores~$\mathscr{H}^{(\lambda)}$ of TT-decomposition of kernel kinetic coefficients ~$\mathscr{C}^{(D)}$\! which will be used at processor with number ~$p$ (~\hbox{$1 \le p \le P$})~:
\begin{gather}
\notag
    \mathbf{h}^{(\lambda)}_{
        r_{\lambda - 1}, \hspace{0.3mm}r_{\lambda}
    }
    := \left[
        H^{(\lambda)}_{
            r_{\lambda - 1}, \hspace{0.3mm}1,
            \hspace{0.3mm}r_{\lambda}
        },
        \hspace{0.3mm}H^{(\lambda)}_{
            r_{\lambda - 1}, \hspace{0.3mm}2,
            \hspace{0.3mm}r_{\lambda}
        },
        \hspace{0.3mm}\ldots,
        \hspace{0.3mm}H^{(\lambda)}_{
            r_{\lambda - 1}, \hspace{0.3mm}N\!,
            \hspace{0.3mm}r_{\lambda}
        }
    \right]^{T}
    \in \mathbb{R}^{N}\!,
\\\notag
    \mathbf{H}^{(\lambda, \hspace{0.3mm}p)}_{r_{\lambda}}
    := \left[
        \left\{
            \mathbf{h}^{(\lambda)}_{1, \hspace{0.3mm}r_{\lambda}}
        \right\}_{\!p}\!,
        \hspace{0.3mm}\left\{
            \mathbf{h}^{(\lambda)}_{2, \hspace{0.3mm}r_{\lambda}}
        \right\}_{\!p}\!,
        \hspace{0.3mm}\ldots,
        \hspace{0.3mm}\left\{
            \mathbf{h}^{(\lambda)}_{R_{\lambda - 1}, \hspace{0.3mm}r_{\lambda}}
        \right\}_{\!p}\hspace{-0.3mm}
    \right]^{T}
    \in \mathbb{R}^{R_{\lambda - 1} \times N/P}\!,
\\\notag
    \mathscr{H}^{(\lambda, \hspace{0.3mm}p)}
    := \left[
        \left\{\mathbf{h}^{(\lambda)}_{r_{\lambda - 1},
        \hspace{0.3mm}r_{\lambda}}\right\}_{\!p}\hspace{-0.3mm}
    \right] 
    \equiv \Bigl[
        \mathbf{H}^{(\lambda, \hspace{0.3mm}p)}_{
            \hspace{0.3mm}r_{\lambda}
        }
    \Bigr]
    \in \mathbb{R}^{
        R_{\lambda - 1} \times N/P \times R_{\lambda}
    }\hspace{-0.3mm}.
\end{gather}

The algorithm for operator $\mathcal{P}^{(d\hspace{0.3mm})}$ is presented in Fig. \ref{algo_convol} and for  $\mathcal{Q}^{(d\hspace{0.3mm})}$ in Fig. \ref{algo_product}. On the input algorithms require to have blocks $\mathscr{H}^{(1,i)}$ of TT-decomposition for kinetic coefficients and vector of concentrations.  For time-integration of the Cauchy problem we utilize standard explicit second order Runge-Kutta method, hence, each time-step requires two evaluations of $\mathcal{P}^{(d\hspace{0.3mm})}$ and $\mathcal{Q}^{(d\hspace{0.3mm})}$. 

We present results of benchmarks of presented algorithm for the generalized Brownian coefficients in Table \ref{table_scalab}. In our experiments we used ClusterFFT library included into Intel~MKL\texttrademark. As soon as FFT is a dominating operation in our algorithm in terms of complexity, we obtain similar performance of our code to performance of ClusterFFT library. From these experiments we obtain acceleration of calculations by order of magnitude. This allows us to consider a broader class of problems of potential interest which can be studied in modest computational time.

\begin{table}
\caption{Speedup of computations with use of ClusterFFT operation for pure ternary aggregation in case of 3-dimensional generalized Brownian kernel with $N = 2^{19}$ equations. Benchmark for 100 time-integration steps with use of second order Runge-Kutta method.  ``Zhores'' supercomputer of Skolkovo Institute of Science and Technology
tables.}\label{table_scalab}
\begin{center}
\begin{tabular}{|c|c|c|}
\hline
~Number of CPU-cores~ &  ~time, sec ~ &~ Speedup~~\\
\hline
1    & 257.80   & 1.00    \\  
2    & 147.62  & 1.75 \\
4    & 80.21    & 3.21 \\
8    & 43.65    & 5.91 \\
16   & 22.63    & 11.39 \\
32   & 14.83    & 17.38 \\
64   & 13.15    & 19.60 \\
128  & 12.22    & 21.09  \\
\hline
\end{tabular}
\end{center}
\end{table}

\section{Conclusions}

\qquad In this paper we present recent developments of tensor based methods for modelling of multi-particle aggregation. We prove estimates of TT-ranks for generalized Brownian kinetic coefficients depending only on dimensionality $D$ but not mode-sizes $N$ of used arrays. We also propose an efficient way of parallel implementation of TT-based approach and demonstrate preliminary results of its parallel scalability.

In our work we used ``Zhores'' supercomputer installed at Skolkovo Institute of Science and Technology \cite{Zhores}. We also would like to acknowledge Talgat Daulbaev for an idea of representation of generalized Brownian coefficients in TT-format in case of $D=3$. The work was supported by the Russian Science Foundation, grant 19-11-00338.

%
%
%
%

\end{document}